\theoremstyle{definition}
\newtheorem{theorem}{Theorem}[section]
\newtheorem{proposition}[theorem]{Proposition}
\newtheorem{lemma}[theorem]{Lemma}
\newtheorem{definition}[theorem]{Definition}
\newtheorem{remark}[theorem]{Remark}
\theoremstyle{remark}
\newtheorem{example}[theorem]{Example}
\newcommand{\spec}[0]{\ensuremath{\text{Spec }}}
\newcommand{\half}[0]{\frac{1}{2}}
\newcommand{\wt}[1]{\ensuremath{\widetilde{#1}}}
\newcommand{\abs}[1]{\left \vert #1 \right\vert}
\newcommand{\takes}[2]{\!:\!#1 \rightarrow #2}
\newcommand{\ra}[0]{\ensuremath{\rightarrow}}
\newcommand{\lra}[0]{\ensuremath{\longrightarrow}}
\newcommand{\nlra}[1]{\stackrel{#1}{\lra}}
\newcommand{\PP}[0]{\ensuremath{\mathds{P}}}
\newcommand{\ZZ}[0]{\ensuremath{\mathds{Z}}}
\newcommand{\RR}[0]{\ensuremath{\mathds{R}}}
\newcommand{\QQ}[0]{\ensuremath{\mathds{Q}}}
\newcommand{\CC}[0]{\ensuremath{\mathds{C}}}
\begin{document}
\bibliographystyle{halpha}
\onehalfspacing

\title[On the existence of affine LG phases in GLSMs]{On the existence of
affine Landau-Ginzburg phases in gauged linear sigma models}

\author[Patrick Clarke]{Patrick Clarke${}^\dagger$}
\address{${}^\dagger$Department of Mathematics, University of Pennsylvania}
\email{pclarke@math.upenn.edu}

\author[Josh Guffin]{Josh Guffin${}^\ddagger$}
\address{${}^\ddagger$Department of Mathematics, University of Pennsylvania}
\email{guffin@math.upenn.edu}

\begin{abstract}
  We prove a simple criterion for the existence of an affine Landau-Ginzburg 
  point in the K\"ahler moduli space of a gauged linear sigma model. 
\end{abstract}
\maketitle

\section{Introduction}

A Landau-Ginzburg (LG) point in the K\"ahler moduli space of a gauged
linear sigma model (GLSM) allows one to compute correlation functions which
are otherwise inaccessible.  In the literature, there is no efficient
method for determining the existence of such a point.  \\

GLSMs were introduced in \cite{Witten:1993yc} as a way of studying
correlation functions that compute Gromov-Witten invariants in certain
related non-linear sigma models (NLSM).  In particular, the low-energy
limit on the Higgs branch of a GLSM is a NLSM whose target space is a toric
variety $X$ obtained as a symplectic $U(1)^\rho$ quotient using the
parameters of the GLSM.\\

For certain classes of GLSMs, one may choose the parameters so that the
target space of the low-energy theory is an orbifold $\CC^n/\Gamma$, with a
superpotential whose critical locus lies at the fixed point of the finite
abelian group $\Gamma$.  Such a setup is called a Landau-Ginzburg theory,
and correlators therein are exactly soluble.  To find if a given GLSM
possesses an LG point, one typically constructs the secondary fan and
laboriously checks whether the D- and F-terms for each chamber satisfy the
requisite condition.  Such a strategy was pursued in \S4.2 of
\cite{Morrison:1994fr}, for example. \\

LG points are also useful in $(0,2)$ GLSMs.  These models reduce in the
low-energy limit to a NLSM on the same variety $X$, but also depend on a
holomorphic vector bundle $\mathcal V \ra X$ that is determined by the data of
the $(0,2)$ GLSM.  Correlation functions in these theories are invariants of
$(X,\mathcal V)$ that generalize the Gromov-Witten invariants of $(X, T_X)$
\cite{Adams:2005tc}.  In cases where the bundle $\mathcal V$ is a deformation
of the tangent bundle, correlators may be computed using a brute-force method
based on \v Cech cohomology \cite{Katz:2004nn,Guffin:2007mp}, or by employing
more refined techniques in the $(0,2)$ GLSM \cite{McOrist:2007kp}.
When $\mathcal V$ is not a deformation of $T_X$, general techniques to
compute correlators in the NLSM do not exist.  However, when the $(0,2)$
GLSM admits an LG phase, correlators may be computed using the methods of
\cite{Melnikov:2009nh}.  Our results should admit a generalization to the
$(0,2)$ case.\\

LG theories have also been useful for the computation of Gromov-Witten
invariants, following the program initiated by Fan, Jarvis, and
Ruan\cite{Fan:2007vf,Fan:2007ms}.  Our results should be especially useful
in this setting for producing previously unknown classes of LG models for
study (see Remark
\ref{rmk:produce}).

\subsection{The Criterion}

Recently, Herbst conjectured \cite{Herbst:2009pv} a criterion for the
existence of a Landau-Ginzburg point in terms of the charge matrix of the
GLSM.  The Herbst criterion can be slightly simplified, and we prove that
the simplified version is equivalent to the existence of an affine LG point
provided the critical locus of the superpotential is compact for some value
(and thus all values) of the K\"ahler parameter.  To this end, we also
provide a rigorous definition of an affine LG point in a GLSM, and prove the
equivalence of symplectic and algebraic quotients for arbitrary GLSMs
without regard for smoothness or compactness.  \\

The {\it Herbst Criterion} proven herein takes the following form.
Consider the charge matrix of the GLSM, $Q$: if the rank of the GLSM gauge
group is $\rho$ and there are $N$ chiral fields, then $Q$ is a $\rho \times
N$ matrix of rank $r$.  Then an affine LG point exists whenever one can choose $r$
linearly-independent columns so that the other $n:=N-r$ columns lie in the
negative cone of the chosen $r$.  This setup is a slight generalization of
the charge matrices normally considered in the physics literature, where
$Q$ is assumed to be full rank.  We provide a mathematical setting for $Q$
in our discussion of symplectic quotients -- see equation
\eqref{eq:torusseq}.  \\

Herbst originally included the condition that the chosen columns should be
unique, in the sense that no column amongst the remaining $n$ is a copy of
one of the chosen.  However, this is implied by the condition that all
others lie in the negative cone.  On the other hand, this corollary can be
quite useful in showing by hand that a given model does not have an affine LG
point.  A precise statement of the Herbst Criterion is given in Definition
\ref{def:hc}.\\

Heretofore, the main class of toric varieties known to admit affine LG phases
were the total spaces of the canonical bundle over compact toric varieties.
Our analysis shows that LG phases are extremely common, and easily produced.
Indeed, our results provide a way to produce \emph{every} possible affine LG
phase for any GLSM -- see Remark \ref{rmk:produce}.

\subsection{Examples}

Before proceeding, we present three well-known examples and discuss the
application of the criterion to them.

\begin{example}
  Consider the canonical bundle of $\PP^m$.  Here, the gauge group is rank
  1 and the charges are arranged in a $1\times (m+2)$ matrix normally
  written as
  \[
  Q = \left (
  \begin{matrix}
	 1 & 1 & \cdots & 1 & -m-1 \\
  \end{matrix}
  \right ).
  \]
  Discarding the first $m+1$ columns due to multiplicity, one is left with
  the column $(-m-1)$.  Since the first $m+1$ columns are in its negative
  cone, this model will possess an LG point for an appropriate choice of
  superpotential.
  \label{ex:canonicalpm}
\end{example}

\begin{example}
  For the canonical line bundle over the product of rational curves, $K \ra
  \PP^1 \times \PP^1$, the gauge group is rank 2 and the charge matrix is
  normally written 
  \[
  Q = \left (
  \begin{matrix}
	 1 & 1 & 0 & 0 & -2 \\
	 0 & 0 & 1 & 1 & -2
  \end{matrix}
  \right ).
  \]
  Examining this matrix, one discards the first four columns since each
  vector occurs with multiplicity and finds that there is only one
  independent unique column.  Therefore this model cannot have an affine LG point.
\end{example}

\begin{example}
  \label{example:canonicalrwp}
  Consider the canonical bundle of the resolved weighted projective space
  $\wt\PP^4_{1,1,2,2,2}$.  This model has a rank 2 gauge group and charge
  matrix
  \begin{equation}
  Q = \left (
  \begin{matrix}
	 0 & 0 & 1 & 1 & 1 & 1  & -4 \\
	 1 & 1 & 0 & 0 & 0 & -2 &  0 
  \end{matrix}
  \right ).
  \label{eq:rwp4cm}
  \end{equation}
  After discarding the first five columns one is left with the final two, which
  are linearly independent.  It is not hard to check that the first five
  columns are contained in the negative cone of the last two.  One can also use
  the algorithm outlined in Remark \ref{rmk:algorithm} to find that the charge
  matrix row-reduces to 
  \[
	 \left(
	 \begin{matrix}
		1 & -4 \\
		-2 & 0 
	 \end{matrix}
	 \right)^{-1}
	 \cdot Q = 
	 \left(
	 \begin{matrix}
		-\frac 1  2  & -\frac 1  2  & 0 & 0 & 0 & 1 & 0 \\
		-\frac 1  8  & -\frac 1  8  & -\frac 1  4  & -\frac 1  4  & -\frac 1  4 
		& 0 & 1
	 \end{matrix}
	 \right),
  \]
  so that the first five vectors clearly lie in the negative cone. Thus,
  for an appropriate choice of superpotential this model will possess an LG
  point.
\end{example}

\subsection*{Acknowledgments}

We thank Manfred Herbst for helpful comments on an early draft of this
manuscript, Jacques Distler for useful conversations, and the organizers of the
2009 conference on \emph{(0,2) Mirror Symmetry and Quantum Sheaf Cohomology} at the
Max-Planck-Institut f\"ur Gravitationsphysik where this work began.  This
material is based upon work supported by the National Science Foundation under
DMS Grant No. 0636606 and 0703643.

\section{Physical Exposition}

We first explain the physical origins of LG points in the GLSM.  We will always
assume that we are dealing with a GLSM featuring a gauge group of rank $\rho$
and $N$ chiral bosons, whose low-energy theory describes a toric variety $X$
for appropriate values of its K\"ahler parameters. \\

To determine the low-energy theory, one imposes a system of constraints arising
as the classical equations of motion of the GLSM Lagrangian on the chiral
bosons in the theory.  In terms of the charge matrix $Q$, chiral bosons
$\phi^i$, and K\"ahler parameters $r^a$, one has $r$ equations
\begin{equation}
  \sum_{i=1}^N Q_i^a \abs{\phi^i}^2 - r^a = 0 \qquad \text{ for } 1 \leq a \leq r.
  \label{eq:dterms}
\end{equation}
Here $r$ is the rank of $Q$.  For the purposes of this paper, we will
simply call the equations above \emph{D-terms}.\\

The D-terms specify the construction of a toric variety $X$ as a
$U(1)^\rho$ quotient, as in equation \eqref{eq:u1quotient}.  The low-energy
theory is also dependent on another class of terms arising from a
torus-invariant holomorphic function $W\takes {\CC^N} \CC$ called the
\emph{superpotential}.  Several terms in the Lagrangian involve this
function, but we will concentrate on one set in particular whose vanishing
is required for supersymmetric vacua:
\begin{equation}
  \sum_{i=1}^N \abs{\frac{\partial W}{\partial \phi^i}}^2 = 0.
  \label{eq:fterms}
\end{equation}
For the purposes of this paper, these will be called \emph{F-terms}.\\

Let $\mathcal V \ra Y$ be a vector bundle of rank $k$ over a compact toric
variety $Y$, whose total space is the $n$-dimensional toric variety $X$.
Let $Z \subset Y$ be the smooth vanishing locus of a holomorphic section
$f\takes Y {\mathcal V}^\vee$.  For certain values of the K\"ahler
parameters -- those in the K\"ahler cone -- such a compact complete
intersection is realized as the target space in the low-energy theory of a
GLSM whose superpotential is
\[
W = \sum_{\alpha = 1}^k p_\alpha f^\alpha(\phi),
\]
where $p_\alpha$ are a subset of the bosonic fields associated with fiber
coordinates on $\mathcal V$ and the $\phi$ are fields associated with the base
$Y$.  Since the zero locus of $f$ is assumed to be smooth, the critical points
of $W$ are precisely the vanishing locus of $f$, lying within the zero section
of the bundle.\\

The model possesses an LG point if there is some value of the K\"ahler
parameters (taken to lie deep in the interior of a top-dimensional cone of the
secondary fan) such that solving the D- and F-terms requires that precisely $r$
of the bosons get a vacuum expectation value (VEV), while the others vanish.
Additionally, one requires that the low-energy superpotential in this phase has
a single degenerate critical point.  
In other words, an LG point is a choice of K\"ahler parameters such that the
low-energy physics is described by a quantum field theory whose bosonic fields
are valued in a vector space (in particular $\CC^n$, or more generally its
quotient by a finite abelian group), governed by a superpotential, and whose
space of vacua consists of a single point. One must take the parameters to be
deep inside a cone to avoid subtleties from quantum corrections.\\

\section{Mathematical Exposition}

Dividing out the $U(1)^\rho$ symmetries of a GLSM with specified D-terms leads
to a toric variety which, along with the superpotential, governs its physics.
The toric variety is obtained as a symplectic quotient at an appropriate value
of the moment map (the $r^a$ in \ref{eq:dterms}).  \\

It is well-known that if the value of the moment map is regular and the
quotient is compact, then it is a projective toric manifold
\cite{Audin:2004ta}.  It is not hard to imagine, though less well-known, that
the quotient is a quasi-projective variety at any value of the moment map,
regardless of regularity or compactness.  We provide a proof of this fact in
\S\ref{ssec:quot}.\\

The group of characters on the quotient, $M \cong \ZZ^{\oplus n}$, fits into an
exact sequence 
\begin{equation}
0 \ra  M \nlra A (\mathfrak u(1)_{\ZZ}^N)^* \nlra 
Q (\mathfrak u(1)_{\ZZ}^\rho)^*.
  \label{eq:torusseq}
\end{equation}
Here $\mathfrak u(1)^N$ is the Lie algebra of a maximal torus in
$\text{Aut}(\CC^N)$ commuting with the action of $\mathfrak u(1)^\rho$ and
$\mathfrak u(1)_\ZZ$ denotes the kernel of the exponential map.  The
$\RR$-linear maps obtained from these by $\otimes_\ZZ \RR$ are denoted by the
same name.  \\

As in equation \eqref{eq:linearaction} below, $Q^t$ specifies the a linear
action of $(\CC^*)^\rho$ on $\CC^N$ and $A$ is the kernel of $Q$.  Physically,
$Q$ is the matrix of charges appearing in the D-terms \eqref{eq:dterms} of the
GLSM.  The Lie algebra $\mathfrak u(1)^N$ has a canonical set, $C = \{
\partial_{\psi_1} \cdots \partial_{\psi_{N}} \}$, corresponding to coordinates
$z_j$ with $\arg{z_j} = 2 \pi \psi_j$.

\begin{definition}
  The map $Q$ in equation \eqref{eq:torusseq} satisfies the \emph{Herbst Criterion}
  if there exists a subset $\{ h_1, \cdots, h_r \}$ of  
  $\{ \partial_{\psi_1}, \cdots , \partial_{\psi_N} \}$ such that
  $\{ Q h_1, \cdots, Q h_r \}$ are linearly independent and 
  \[Q C \setminus \{ Q h_1, \cdots, Q h_r \} \subset 
  \RR_{\leq 0} Q h_1 + \cdots  + \RR_{\leq 0} Q h_r.\]
  \label{def:hc}
\end{definition}

In particular, isomorphisms $(\mathfrak u(1)^\rho)^*\cong \RR^\rho = \ZZ^\rho
\otimes_\ZZ \RR$ and $(\mathfrak u(1)^N)^*\cong \RR^N = \ZZ^N \otimes_\ZZ \RR$
give an integer matrix expression for $Q$.  The criterion is satisfied if there
exists an reordering of the basis of $\RR^N$ and a rational change of basis for
$\RR^\rho$ so that 
\begin{equation}
  Q = 
  \left(
  \begin{matrix}
    \mathds 1_{r\times r} & \mathbf n_{r \times n} \\
    0 & 0
  \end{matrix}
  \right),
  \label{eq:hc}
\end{equation}
with the entries of $\mathbf n$ non-positive rational numbers.  We will prove
the equivalence of the Herbst Criterion to the existence of an affine quotient
in \S\ref{ssec:proofHerbst}. 

\subsection{Toric Quotients}
\label{ssec:quot}

The most important consequence of the equality of symplectic and algebraic
quotients is that we can read off the algebro-geometric description of the
quotient from its image under the moment map.  This image, known as the moment
polyhedral set, is defined by a finite family of inequalities. \\

Later, we rely on the geometry of the polyhedral set to understand when
quotient is unchanged for small changes in the K\"ahler parameter.\\

Every linear action of an algebraic torus on an $N$-dimensional complex vector
space $V$,
\(
(\CC^*)^\rho 
\circlearrowright V,
\)
may be unitarily diagonalized so that for any $\vec \lambda \in
(\CC^*)^\rho$, $\vec z \in \CC^N$, and integers $Q^t_{ij}$ for $1 \leq i
\leq N, 1\leq j \leq \rho$,
\begin{equation}
  \vec{\lambda} \star \vec{z} = 
  (\lambda_1^{Q^t_{11}} \cdots \lambda_\rho^{Q^t_{1\rho}} z_1,\cdots,  
  \lambda_1^{Q^t_{N1}} \cdots \lambda_\rho^{Q^t_{N\rho}} z_N).
\label{eq:linearaction}
\end{equation}
The standard symplectic form $\omega = \frac{\sqrt{-1}}{2} \sum_i dz^i
\wedge d\overline{z}^i$ and the standard action of $U(1)^N$ on $\CC^N $
define a moment map 
\begin{equation*}
  \mu \takes{\CC^N}{\RR^N} = (\mathfrak{u}(1)^N)^*  .
\end{equation*}
In coordinates, $\mu$ is given by $\vec{z} \mapsto \half (|z_i|^2)_i$.  \\

The moment map for the action of $U(1)^\rho \subset (\CC^*)^\rho$ on
$(\CC^N, \omega)$ is given by the composition
\begin{equation*}
  \mu_Q = Q \circ \mu \takes{\CC^N}{\RR^\rho} = (\mathfrak{u}(1)^\rho)^*.
\end{equation*}
Because of D-term contributions \eqref{eq:dterms} to the Lagrangian, the
physical action is stationary for maps $\Sigma \rightarrow \mu_Q^{-1}(s)$
for a choice of $s \in \RR^\rho$ .  If $s$ is not in the image of $Q$, $X$
is empty.  Otherwise, $s$ corresponds to $r^a$ in the D-terms.  These maps
are taken up to the action of $U(1)^\rho$ on $\mu_Q^{-1}(s)$.  Thus,
it is equivalent to consider maps $\Sigma \rightarrow X$, where 
\begin{equation}
  X = \mu_Q^{-1}(s) / U(1)^\rho.
  \label{eq:u1quotient}
\end{equation}

$X$ naturally carries the structure of a toric variety and can be written
as a geometric quotient as follows.  Define $E$ to be the complement of
$(\CC^*)^N \star \mu_Q^{-1}(s)$ in $\CC^N$, and consider 
\begin{equation*}
  \mathds{X} = (\CC^N \setminus E) / (\CC^*)^\rho .
\end{equation*}
Since the $U(1)^\rho$ orbits are contained in the $(\CC^*)^\rho$ orbits,
there is a natural map $X \rightarrow \mathds{X}$.  We will show that this
map is an isomorphism.  We first check that it is an injection by showing
that the orbits  $\RR_+^r \star \vec{z}$ for $\vec{z} \in \mu_Q^{-1}(s)$
are disjoint.  This is accomplished by showing that $\mu_Q$ restricted to
such an orbit is injective.  

\begin{lemma}
  $X \rightarrow \mathds{X}$ is injective
  \label{lem:inj}
\end{lemma}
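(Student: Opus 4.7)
The plan is to reduce the lemma to the claim (already highlighted in the text) that $\mu_Q$ is injective on each $\RR_+^\rho$-orbit through a point of $\mu_Q^{-1}(s)$, and then to prove that claim by recognizing the restricted moment map as the gradient of a convex function.

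For the reduction, suppose $\vec z, \vec w \in \mu_Q^{-1}(s)$ satisfy $\vec w = \vec\lambda \star \vec z$ for some $\vec\lambda \in (\CC^*)^\rho$. Using the polar decomposition $(\CC^*)^\rho = \RR_+^\rho \cdot U(1)^\rho$, I would write $\vec\lambda = \vec t \cdot \vec u$ with $\vec t \in \RR_+^\rho$ and $\vec u \in U(1)^\rho$; since $U(1)^\rho$ preserves every $|z_i|^2$ and therefore $\mu_Q$, the point $\vec z' := \vec u \star \vec z$ again lies in $\mu_Q^{-1}(s)$, and $\vec w = \vec t \star \vec z'$. Thus $\vec z'$ and $\vec w$ are two points of $\mu_Q^{-1}(s)$ sharing an $\RR_+^\rho$-orbit, and orbitwise injectivity of $\mu_Q$ would force $\vec w = \vec z' = \vec u \star \vec z$, exhibiting $\vec z$ and $\vec w$ in a common $U(1)^\rho$-orbit and hence the same class in $X$.

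To prove orbitwise injectivity, I would parametrize $\vec t \in \RR_+^\rho$ by $\vec s \in \RR^\rho$ via $\vec t = \exp(\vec s)$. Combining \eqref{eq:linearaction} with $\mu(\vec z) = \half(|z_i|^2)_i$ gives
\begin{equation*}
\mu_Q(e^{\vec s} \star \vec z) \;=\; \nabla_{\vec s}\, F(\vec s), \qquad F(\vec s) \;=\; \frac{1}{4}\sum_{i=1}^{N} |z_i|^2 \exp\bigl(2\langle Q_{\cdot i}, \vec s\rangle\bigr),
\end{equation*}
where $Q_{\cdot i} \in \RR^\rho$ is the $i$-th column of $Q$. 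Being a non-negative combination of exponentials of linear functionals, $F$ is convex, with Hessian $H(\vec s) = \sum_i |z_i|^2 e^{2\langle Q_{\cdot i}, \vec s\rangle}\, Q_{\cdot i} Q_{\cdot i}^t$. The null space of $H(\vec s)$ is the orthogonal complement of $\{Q_{\cdot i} : z_i \neq 0\}$, which coincides with the Lie algebra of the stabilizer of $\vec z$ inside $\RR_+^\rho$. Hence $F$ is strictly convex after quotienting by that subspace, and $\nabla F$ descends to an injection on the quotient. Consequently any $\vec s_1, \vec s_2$ with $\mu_Q(e^{\vec s_1}\star\vec z) = \mu_Q(e^{\vec s_2}\star\vec z)$ differ by an element of the stabilizer Lie algebra, giving $e^{\vec s_1}\star\vec z = e^{\vec s_2}\star\vec z$ as required.

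The one non-routine step is identifying $\mu_Q$ restricted to an $\RR_+^\rho$-orbit as a convex gradient --- a finite-dimensional shadow of the Kempf--Ness picture; once that formula is in hand, the rest is simply bookkeeping to confirm that the Hessian's null directions coincide with the stabilizer Lie algebra.
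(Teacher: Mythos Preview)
Your argument is correct.  The reduction via the polar decomposition is the same as the paper's, and your gradient formula, Hessian computation, and identification of the Hessian null space with the stabilizer Lie algebra are all accurate; the conclusion that $\mu_Q$ is injective on each $\RR_+^\rho$--orbit then follows from strict convexity transverse to that null space exactly as you claim.

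The paper, however, does not invoke a convex potential.  Instead it works directly with the equation $Q(\mathds 1-\vec\lambda^{Q^t})\vec q=0$ and shows that $(\mathds 1-\vec\lambda^{Q^t})\vec q$, once it lies in $\ker Q=\text{Im }A$, must vanish.  The trick is a sign replacement: since $1-x$ and $-\log x$ share the same sign for $x>0$, the cone $(\mathds 1-\vec\lambda^{Q^t})\RR_{\geq 0}^N$ equals $-\text{diag}(Q^t\log\vec\lambda)\,\RR_{\geq 0}^N$, and any nonzero element of this cone pairs positively with $-Q^t\log\vec\lambda$; but $-Q^t\log\vec\lambda$ pairs to zero with anything in $\text{Im }A$ because $QA=0$.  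Underneath, this is the same positivity you exploit---the pairing is essentially $(\nabla F(\vec s)-\nabla F(0))\cdot\vec s\geq 0$ with $\vec s=\log\vec\lambda$---but the paper's version is packaged as a two-line linear-algebra/sign argument with no mention of convexity or Hessians.  Your Kempf--Ness formulation is more conceptual and makes the connection to the standard symplectic/GIT comparison explicit; the paper's is shorter and entirely elementary.
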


\begin{proof}
  The action of $\RR_+^\rho \subset (\CC^*)^\rho$ on $\CC^N$ induces an
  action of $\RR_+^\rho$ on $\RR_{\geq 0}^N$ defined by $\vec{\lambda}
  \star \mu(\vec{z}) = \mu(\vec{\lambda} \star \vec{z})$.  It is easy to
  see that this is independent of the choice of $\vec{z}$, as different
  choices are given by the action of $U(1)^N$.  Directly, the action
  $\RR_+^\rho$ on $\RR_{\geq 0}^N$ is given by 
  \begin{equation}
    \vec{\lambda} \star \vec{q} = \vec{\lambda}^{Q^t} \vec{q} = \text{diag}(\lambda^{{Q^t}_1}, \cdots \lambda^{{Q^t}_N}) \vec{q}
    \label{eq:RpAction}
  \end{equation}
  where $\lambda^{{Q^t}_i} = \lambda_1^{{Q^t}_{i1}} \cdots
  \lambda_\rho^{{Q^t}_{i\rho}}$.  \\
  
  We would like to show that the action of $\RR_+^r$ on $\CC^N$ changes the
  value of $\mu_Q$.  It suffices to show that $\RR_+^r$ on $\RR_{\geq 0}^N$
  changes the value of $q$.  Concretely, we wish to show $Q (\mathds{1} -
  \vec{\lambda}^{ Q^t}) \vec{q} = 0$ has no solutions except $\vec{q} = 0$.\\
  
  If we denote the kernel of $Q$ by $A$ as in equation \eqref{eq:torusseq},
  then it is equivalent to show
  \begin{equation*}
    (\mathds{1} - \vec{\lambda}^{ Q^t})
    \RR_{\geq 0}^N \cap \text{Im} 
    A = \{ 0 \} .
  \end{equation*}
  
  Because both $1-x$ and $-\log(x)$ are positive/negative/zero on the same
  set we have
  \begin{equation*}
    (\mathds{1} - \vec{\lambda}^{Q^t})
    \RR_{\geq 0}^N 
    = -\log(\vec{\lambda}^{Q^t}) \RR_{\geq 0}^N 
    = -\text{diag}({Q^t} \log(\vec{\lambda})) \RR_{\geq 0}^N .
  \end{equation*}
  An element of $-\text{diag}({Q^t} \log(\vec{\lambda})) \RR_{\geq 0}^N$ is
  non-zero if and only if its dot product the vector
  \begin{equation*}
    -\text{diag}({Q^t} \log(\vec{\lambda})) 
    \left[
      \begin{array}{c}
	1 \\
	\vdots \\
	1
      \end{array}
      \right] = -{Q^t} \log(\vec{\lambda})
  \end{equation*}
  is non-zero.  Finally, observe that the dot product for any $v \in M_\RR$
  is given by 
  \begin{equation*}
    -{Q^t} \log(\vec{\lambda}) \cdot A v = 
    -\log(\vec{\lambda})^t 
    Q  A v = 0.
  \end{equation*} 
\end{proof}


\begin{lemma}
  $X \rightarrow \mathds{X}$ is surjective.
\end{lemma}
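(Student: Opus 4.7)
The plan is to use a Kempf--Ness variational argument: given $\vec{w} \in \CC^N \setminus E = (\CC^*)^N \star \mu_Q^{-1}(s)$, I will find $\vec{\lambda} \in \RR_+^\rho \subset (\CC^*)^\rho$ with $\vec{\lambda} \star \vec{w} \in \mu_Q^{-1}(s)$; this immediately produces a preimage of $[\vec{w}] \in \mathds{X}$ inside $X$. Parametrize $\vec{\lambda} = (e^{t_a})_a$ and write $q_i := |w_i|^2$. A direct computation from \eqref{eq:RpAction} gives $\mu(\vec{\lambda} \star \vec{w})_i = \frac{1}{2} q_i\, e^{2(Q^t \vec{t})_i}$, so the equation $\mu_Q(\vec{\lambda} \star \vec{w}) = s$ is exactly the critical-point equation $\nabla F(\vec{t}) = 0$ for the convex function
\[
F(\vec{t}) \;=\; \frac{1}{4}\sum_{i=1}^N q_i\, e^{2(Q^t \vec{t})_i} \;-\; \vec{t} \cdot s.
\]
The proof therefore reduces to showing that $F$ attains its minimum.

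By hypothesis there exists $\vec{z} \in \mu_Q^{-1}(s)$ with the same support $I := \{ i : w_i \neq 0 \} = \{ i : z_i \neq 0 \}$. Expanding the moment equation for $\vec{z}$ gives $s = \mu_Q(\vec{z}) = \frac{1}{2}\sum_{i \in I} |z_i|^2 \, Q^t_i$, a combination of the columns $\{ Q^t_i : i \in I \}$ with strictly positive coefficients, so $s$ lies in the \emph{relative interior} of their conical hull. Let $L := \{ \vec{v} \in \RR^\rho : \vec{v} \cdot Q^t_i = 0 \text{ for all } i \in I \}$; this is the Lie algebra of the $\RR_+^\rho$-stabilizer of $\vec{w}$. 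The displayed formula for $s$ gives $\vec{v} \cdot s = 0$ for every $\vec{v} \in L$, so $F$ is constant along cosets of $L$ and descends to a convex function $\bar F$ on $\RR^\rho/L$.

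For coercivity of $\bar F$, pick any nonzero $\vec{v} \in \RR^\rho \setminus L$. If $\vec{v} \cdot Q^t_{i_0} > 0$ for some $i_0 \in I$, the exponential term $q_{i_0}\, e^{2\tau \vec{v} \cdot Q^t_{i_0}}$ drives $F(\tau \vec{v}) \to +\infty$ as $\tau \to +\infty$. Otherwise $\vec{v} \cdot Q^t_i \leq 0$ for every $i \in I$ with strict inequality somewhere (since $\vec{v} \notin L$); the strict positivity of the $|z_i|^2$ then forces $\vec{v} \cdot s = \frac{1}{2}\sum_{i \in I} |z_i|^2 (\vec{v} \cdot Q^t_i) < 0$, and the linear term $-\tau \vec{v} \cdot s$ alone sends $F(\tau \vec{v}) \to +\infty$. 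Thus $\bar F$ attains its minimum, and any lift $\vec{t}_0 \in \RR^\rho$ of the minimizer produces $\vec{\lambda}_0 = \exp(\vec{t}_0) \in \RR_+^\rho$ with $\vec{\lambda}_0 \star \vec{w} \in \mu_Q^{-1}(s)$, establishing surjectivity. I expect the main obstacle to be this coercivity step, where the semistability hypothesis has to be translated into strict relative-interior membership for $s$ and reconciled with the stabilizer direction $L$ so that the convex minimization is well-posed on the quotient $\RR^\rho/L$.
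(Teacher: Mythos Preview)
Your argument is correct. The key observations --- that the hypothesis $\vec{w}\in(\CC^*)^N\star\mu_Q^{-1}(s)$ yields a $\vec{z}\in\mu_Q^{-1}(s)$ with the \emph{same} support $I$, and hence that $s$ lies in the relative interior of $\mathrm{Cone}\{Q^t_i : i\in I\}$ --- are exactly what is needed to make the Kempf--Ness functional coercive on $\RR^\rho/L$. Your ray-by-ray blow-up is enough because $F$ is convex: if $\{\vec t_n\}$ had $\|\vec t_n\|\to\infty$ with $F(\vec t_n)$ bounded, then any subsequential limit $\vec v$ of $\vec t_n/\|\vec t_n\|$ would, by convexity and continuity, satisfy $F(R\vec v)\leq C'$ for all $R$, contradicting your dichotomy.

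The paper proves the same statement by a genuinely different mechanism. Rather than minimizing a functional, it takes the straight-line path $\gamma(t)=\mu_Q(\exp(t\ln g)\star p)$ in $\RR^\rho$ (using a known $g\in(\CC^*)^N$ that moves $p$ onto the level set) and \emph{lifts} $\gamma'$ from the big torus to the small torus. The lift exists because of the rank identity
\[
d(\mu_Q\circ\star)\big|_{\mathfrak h}\;=\;Q\,\mathrm{diag}(q_i\lambda^{Q^t_i})\,Q^t\;=\;(fQ^t)^t(fQ^t),
\]
which has the same rank as $Q$ and hence the same image as $d(\mu_Q\circ\star)|_{\mathfrak g}$. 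Your approach trades this differential/ODE step for convex analysis: it is more self-contained, handles the continuous stabilizer $L$ cleanly by passing to the quotient, and sidesteps any question about global existence of the lifted path. The paper's approach, on the other hand, isolates the structural fact that the moment map is already ``surjective at the infinitesimal level'' from the small torus, which is of independent interest and reusable.
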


\begin{proof}
  First notice that Lemma \ref{lem:inj} guarantees 
  \( X = ((\CC^*)^\rho \star
  \mu_Q^{-1}(s))/(\mathds{C^*})^\rho.\)
  So, we need to check that 
  \begin{equation*}
	 (\CC^*)^N \star \mu_Q^{-1}(s)
	 = (\CC^*)^\rho \star \mu_Q^{-1}(s) .
  \end{equation*}
  We will show by construction that given any element $p \in (\CC^*)^N
  \star \mu_Q^{-1}(s)$ there exists an element $h \in (\CC^*)^\rho$ such that
  $\mu_Q(h \star p) =  s$.  Choosing $g$ such that $\mu_Q(g \star p) = s$, $h$
  will be constructed by lifting the curve $\gamma(t) = \mu_Q(\exp(t \ln{g})
  \star p) \subset \RR^{\rho}$ to a curve in $(\CC^*)^\rho$.\\

  Denote the Lie algebra of $(\CC^*)^N$ by $\mathfrak{g}$ and the Lie algebra
  of $(\CC^*)^\rho$ by $\mathfrak{h}$.  Then, given a curve $\eta\takes
  {[0,1]_t}{\mathfrak{g}}$, we obtain a curve 
  \( P\exp(\eta)\takes{[0,1]_t}{(\CC^*)^N}\)
  defined by $\frac{d}{dt} P\exp(\eta) = \eta$.  Furthermore if $\eta$ lies in
  $\mathfrak{h}$ then the resulting curve is in $(\CC^*)^\rho$.  To be clear,
  this notation is with respect to the trivialization by right-invariant vector
  fields: $T_e(\CC^*)^N = \mathfrak{g}$.\\

  The typical fibre of the tangent bundle of $(\CC^*)^N \times \CC^N$ is
  $\mathfrak{g} \oplus \CC^N$.  Consider the differential $d (\mu_Q \circ
  \star)$ of the composition of $\star$ with $\mu_Q \takes{\CC^N}
  {\RR^\rho}$.  Restricting to $\mathfrak{g}$ induces a map from Lie
  algebra of $(\CC^*)^N$ to the tangent space of $\RR^\rho$.  Given a curve
  $\gamma\takes{[0,1]} {\RR^\rho}$ such that $\gamma'(t)$ is in the image
  of $\mathfrak{g}$ under $d(\mu_Q \circ \star)$, denote the lifted curve by
  $\hat{\gamma}\takes{[0,1]}{\mathfrak{g}}$.  Observe that it has
  the property that $\mu_Q(P\exp(\hat{\gamma}(t)) \star l_0) = \gamma(t)$ for
  any choice $l_0 \in \mu_Q^{-1}(\gamma(0))$.\\
  
  Now consider the curve $\gamma(t) = \mu_Q(\exp(t \ln{g}) \star p)$.  Observe
  that $\gamma(0) = \mu_Q(p)$ and $\gamma(1) = s$.  The proof then depends on the
  existence of a lift of $\gamma'$, $\eta\takes{[0,1]}{\mathfrak{h}}
  \subset \mathfrak{g}$.  Once we have this lift, we can take  $h = P
  \exp(\eta(1))$.  By construction, $\gamma' \subset d(\mu_Q \circ
  \star)(\mathfrak{g})$ since $\hat{\gamma}(t) = t \ln{g}$ is a lift to
  $\mathfrak{g}$.  We will prove the existence of $\eta$ by showing $d(\mu_Q
  \circ \star)(\mathfrak{g}) = d(\mu_Q \circ \star)(\mathfrak{h})$.  \\

  The differential $d(\mu_Q \circ \star)$ annihilates $\mathfrak{u}(1)^N$.
  So, we can restrict our attention to its evaluation on $\RR_+^N$.  As before,
  the action is diagonal and so the rank of the differential at $\vec{q} \in
  \RR_{\geq 0}$ equals the dimension of the smallest coordinate subspace
  containing $\vec{q}$ -- that is, the number of non-zero entries of $\vec{q}$.
  Note that it suffices to check when $\vec{q}$ has all non-zero entries, as
  the appearance of a zero-entry is the same as replacing $N$ with $N-1$.\\

  As in equation \eqref{eq:RpAction}, the action of $\RR_+^\rho$ is given by
  the product $\vec{\lambda}^{Q^t} \vec{q}$.  The differential of the action is  
  \begin{equation*}
    d (\vec{\lambda}^{Q^t}) \cdot \vec{q} =
    \text{diag}(\sum_{j=1}^\rho \lambda^{Q^t_i} \ Q^t_{ij} \  d\log{\lambda_j})_i \cdot \vec{q}. 
  \end{equation*}
  The Jacobian can be written $\text{diag}(q_i \lambda^{Q^t_i}) Q^t$, and it
  follows that the Jacobian of $\mu_Q \circ \star$ is given by $Q\,
  \text{diag}(q_i \lambda^{Q^t_i}) Q^t$.  If we set $f^2 = \text{diag}(q_i
  \lambda_i^{Q^t})$ for a diagonal square matrix $f$, then we can write the
  Jacobian as 
  \begin{equation*}
    (fQ^t)^t (f Q^t).
  \end{equation*}
  The rank of $f Q^t$ is the rank of $Q^t$, and it is not too difficult to
  check that any matrix of the form $L^t L$ has the same rank as $L$.  It
  is also easy to check that the rank of $d(\mu_Q \circ \star)$ also equals
  the rank of $Q^t$, so we are done.
\end{proof}

One consequence of these proofs is that if
$\text{Stab}_{U(1)^\rho}(\vec{z})$ is isomorphic to $T \times U(1)^\ell$,
for some torsion group $T$, then $\text{Stab}_{(\CC^*)^\rho}(\vec{\lambda}
\star \vec{z})$ is isomorphic to $T \times (\CC^*)^\ell$ for any
$\vec{\lambda} \in (\CC^*)^\rho$.  \\

As with usual symplectic quotients (no regularity assumption here), the map
$\mu$ restricted to $\mu_Q^{-1}(s)$ descends to a map $\mu_X\takes X
{\RR^N}$.  $\mu_X(X)$ lies in the affine translation of $\text{Im
}A$ over $s$, so we regard it as a map 
\begin{equation*}
  \mu_X \takes X {\text{Im }A} \cong M_\mathds{R}.
\end{equation*}
Here $M_\RR = M \otimes_\ZZ \RR$.

\subsection{The Herbst Criterion and affine Landau-Ginzburg points}
\label{ssec:proofHerbst}

The vector space $\text{im } Q \cong \RR^r \subset (\mathfrak u(1)^\rho)^*$
may be decomposed into regions fitting together as a polyhedral fan known
as the \emph{secondary fan}\cite{Billera:1990sp}. The level sets of $\mu_Q$
over the relative interior of its top-dimensional cones define isomorphic
toric varieties.\\

Furthermore, given two adjacent top-dimensional cones and corresponding
varieties, the quotient construction in equation \eqref{eq:u1quotient}
applied to the codimension-one cone separating them induces a proper
birational transformation between the two varieties.  See \S3.4 of
\cite{Cox:2000vi} for a nice exposition.\\

Given a point $s \in \RR^r$, one constructs the corresponding polytope by
first choosing a lift $\wt s \in \RR^N $ such that $Q(\wt s) = s$.  The
polytope is defined to be 
\begin{equation}
  P_s := \left \{ m \in M_\RR \; | \; A(m) + \wt s \geq 0 \right \}.
  \label{eq:polytope}
\end{equation}
The constraints in the definition of $P_s$ are easily reinterpreted as a
collection of $N$ half-spaces in $M_\RR$ whose inward normal vectors are
the rows of $A$.  Furthermore, one can show that the image of the polytope
under the map $m \mapsto A(m) + \wt s$ is exactly $\mu_X(X)$.  For this
reason, $P_s$ is known as the moment polyhedral set of $X$ at level $s$.

\begin{definition}
  The relative interior of a top-dimensional cone of the secondary fan is
  called a \emph{phase} of the associated GLSM.
\end{definition}

\begin{definition}
  A point in the secondary fan is \emph{stable} if it is contained in
  the relative interior of a top-dimensional cone.
\end{definition}

\begin{definition}
  A stable point $s \in \RR^r$ is \emph{affine} if
  the polytope $P_s$ is a top-dimensional simplicial cone in $M_\RR$.
\end{definition}

The nomenclature \emph{affine} is justified, since via standard
construction \cite{Fulton:1993tv}, vertices in the polytope correspond to affine open
sets that are glued together using the data of higher dimension faces.
As there is one vertex in a polyhedral cone there is only one open set.
In \S\ref{ssec:orbifold}, we will show that in fact the quotient is
$\CC^n / \Gamma$ for $\Gamma$ a finite abelian group.  

\begin{lemma}
  \label{lem:hcasp}
  The map $Q$ satisfies the Herbst Criterion iff there is an affine stable
  point in its image.
\end{lemma}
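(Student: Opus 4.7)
The plan is to use the Gale-style duality between $Q$ and $A$ provided by the exact sequence \eqref{eq:torusseq}. Write $v_i \in N_\RR := M_\RR^*$ for the $i$th row of $A$, so that $P_s = \{m \in M_\RR : \langle v_i, m\rangle + \tilde s_i \geq 0,\ i=1,\ldots,N\}$. The technical heart of the proof is the following easy duality: for any partition $\{1,\ldots,N\} = J \sqcup J^c$ with $|J|=n$, the set $\{v_j : j \in J\}$ is a basis of $N_\RR$ if and only if $\{Q_i : i \in J^c\}$ is a basis of $\text{Im}(Q) \cong \RR^r$, which is immediate from exactness of \eqref{eq:torusseq}. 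Moreover, assuming both bases exist and writing $v_i = \sum_{j \in J} \alpha_{ij} v_j$ for $i \in J^c$ and $Q_k = \sum_{j \in J^c} \beta_{kj} Q_j$ for $k \in J$, one substitutes into the universal relation $\sum_i Q_i \otimes v_i = 0$ in $\RR^r \otimes N_\RR$ (valid because $QA = 0$) and reads off coefficients in the basis $\{Q_j \otimes v_k\}_{j \in J^c,\, k \in J}$ to obtain $\beta_{kj} = -\alpha_{jk}$. Hence the sign conditions $\alpha_{ij} \geq 0$ and $\beta_{kj} \leq 0$ are equivalent.

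For the forward direction, suppose the Herbst criterion holds with partition $J \sqcup J^c$ as above. Choose the lift $\tilde s$ with $\tilde s_i > 0$ for $i \in J^c$ and $\tilde s_j = 0$ for $j \in J$, and set $s = Q\tilde s$. Let $m^* \in M_\RR$ be the unique point satisfying $\langle v_j, m^*\rangle = 0$ for $j \in J$, which exists because $\{v_j : j \in J\}$ is a basis. The $n$ inequalities indexed by $J$ cut out a top-dimensional simplicial cone with apex $m^*$, and for each $i \in J^c$ we compute
\[
\langle v_i, m\rangle + \tilde s_i = \sum_{j \in J} \alpha_{ij} \langle v_j, m\rangle + \tilde s_i \geq \tilde s_i > 0
\]
on this cone, using $\alpha_{ij} \geq 0$ from the duality. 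Thus the remaining inequalities are strictly slack, $P_s$ equals the simplicial cone, and $s$ is affine. Since strict slackness is an open condition on the lift, $s$ lies in the relative interior of the top-dimensional secondary-fan chamber $\text{pos}\{Q_i : i \in J^c\}$, hence is stable.

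Conversely, let $s$ be an affine stable point and let $J \subset \{1,\ldots,N\}$ with $|J|=n$ index the facet-defining inequalities of the simplicial cone $P_s$; then $\{v_j : j \in J\}$ is a basis of $N_\RR$. For each $i \in J^c$, redundancy of $\langle v_i, m\rangle + \tilde s_i \geq 0$ on $P_s$ forces $v_i = \sum_{j \in J} \alpha_{ij} v_j$ with $\alpha_{ij} \geq 0$: otherwise, by pushing $\langle v_{j_0}, m\rangle$ to $+\infty$ along the edge of the cone dual to a $j_0$ with $\alpha_{i j_0} < 0$, the inequality would be violated. Applying the sign duality from the first paragraph, $\{Q_i : i \in J^c\}$ is a basis of $\RR^r$ and each $Q_k$ for $k \in J$ is a non-positive combination of them, which is precisely the Herbst criterion.

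The main point requiring care is identifying the chamber structure of the secondary fan with the simplicial cones $\text{pos}\{Q_i : i \in J^c\}$ coming from Herbst partitions. This reduces to recognizing that strict slackness of the redundant facets at the apex is precisely the open condition defining each chamber: openness gives stability in the forward direction, while degeneracy on the boundary of such a cone would permit a perturbation of $s$ that alters the combinatorial type of $P_s$, contradicting stability in the reverse direction.
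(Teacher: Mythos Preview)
Your proof is correct and follows essentially the same approach as the paper's: both directions hinge on the sign relation between the coefficients expressing the redundant normals $v_i$ in terms of the facet normals and the coefficients expressing the remaining $Q_k$ in terms of the chosen basis columns, which you phrase abstractly as the Gale-duality identity $\beta_{kj}=-\alpha_{jk}$ and the paper phrases concretely as $\mathbf n=-\mathbf N$ via the block forms \eqref{eq:hc} and \eqref{eq:div}. Your ray-pushing contradiction in the converse is exactly the paper's argument with the orthonormal pairing $\langle e_j,\zeta\rangle$, and both treatments obtain stability in the forward direction by observing that the constructed $s$ sweeps out a full-dimensional open cone.
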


\begin{proof}
  Consider a charge matrix $Q$ satisfying the Herbst Criterion -- it may be
  written in the form \eqref{eq:hc}.  Since $A$ is full rank, there is a
  basis such that
  \begin{equation}
	 A = \left ( \begin{matrix}
		\mathbf N_{r \times n} \\
		\mathds 1_{n \times n}
	 \end{matrix} \right).
  \label{eq:div}
  \end{equation}
  Furthermore, the sequence \eqref{eq:torusseq} implies
  that $ \mathbf n_{r \times n } = - \mathbf N_{r \times n}$. \\

  We now use these facts to construct a simplicial polytope.  Select a
  point $\sigma$ in the positive orthant of $\RR^r$ (positive in the basis
  chosen so that $Q$ is as in equation \eqref{eq:hc}).  One may then select a
  lift $\wt \sigma\in \RR^N$ whose first $r$ entries are $\sigma$ and whose
  final $n$ entries are zero. \\

  The resulting half-spaces are of two types: those arising the first $r$
  rows of $A$, and those arising from the last $n$ rows.  Those defined by
  the first $r$ rows are of the form
  \begin{equation}
	 H^+_i = \{ m \in M_\RR \; |\;  \mathbf N_{i}\cdot m \geq - \sigma_i\},
	 \label{eq:polytopeNs}
  \end{equation}
  while those defined by the last $n$ rows take the form
  \[
  H^+_j = \{ m \in M_\RR \; |\;  m_j \geq 0\}.
  \]
  Half-spaces defined by the last rows pick out the positive orthant of
  $M_\RR$, and since the entries of $\mathbf N$ are all non-negative, any
  $m\in M_\RR^+$ will satisfy the inequality \eqref{eq:polytopeNs}.  Thus,
  the polytope consists of the positive orthant, which is a top-dimensional
  simplicial cone in $M_\RR$.  Since $\sigma$ may be taken to be any
  element in the positive orthant, one may choose it to be stable. \\

  To prove the converse, let $P \subset M_\RR \cong \RR^n$ be a
  top-dimensional simplicial cone defined by $N$ half-spaces.  We first
  show by contradiction that the inward-pointing normal vectors of all $N$
  half-spaces are contained in $P$. \\

  By translations, we can take the apex of the cone $P$ to be the origin in
  $M_\RR$.  Let $\{e_1,\cdots, e_n\}$ generate the rays of $P$, so that $P
  = \text{Cone}(e_1, \cdots, e_n)$.  Define $\langle \ , \  \rangle$ so
  these vectors are orthonormal.  Let $H^+$ be a half-space with normal
  $\zeta \not \in P$:  by definition, we have 
  \[
  H^+ := \{ m \in M_\RR \; | \; \langle m, \zeta \rangle \geq - a \}.
  \]
  Note that if $a < 0$, $H^+$ does not contain the origin, so that $a \geq
  0$. Furthermore, stability implies that $a > 0$.\\
  
  Since $\zeta \not \in \text{Cone}(e_1, \cdots, e_n)$, there exists an
  $e_j$ with $\langle e_j, \zeta \rangle = \beta < 0$. Then for all $\alpha
  > \abs{a\slash\beta}$, 
  \[
  \langle \alpha e_j, \zeta \rangle  = \alpha \beta < - \abs a.
  \]
  Therefore $\alpha e_j$ is not in $\text{Cone}(e_1,\cdots, e_n)$, a
  contradiction.\\

  Since all the inward normals are positive,  a basis for $A$ exists (the
  $e_j$) such that \eqref{eq:div} holds with all entries of $\mathbf N$
  positive -- exactly the Herbst Criterion.  
\end{proof}

\begin{definition}
  An \emph{affine Landau-Ginzburg point} of a GLSM is an affine stable point such
  that $W\takes {X} \CC$ has an isolated critical point.  The phase in
  which an affine LG point lies is known as an \emph{affine Landau-Ginzburg phase}.
  \label{def:LG}
\end{definition}

Hereafter, we will refer to an affine LG point (phase) as an LG point
(phase).
As mentioned earlier, in an LG phase
$X \cong \CC^n / \Gamma$ for a finite group $\Gamma \subset U(1)^n$.  We
shall explain the origin of $\Gamma$ and how to compute it in
\S\ref{ssec:orbifold}.  Also, note that if the critical locus of $W$ is
compact at an affine stable point then it is zero-dimensional,  and a
``nearby'' LG phase may be found by modifying the coefficients of the
monomials in $W$ so that its critical locus contracts to a single point.
In fact, compactness of the critical locus is independent of phase, as the
following lemma shows.  

\begin{lemma}
  Consider a GLSM with secondary fan $\Sigma$ and two top-dimensional
  cones $\sigma, \sigma^\prime\subset \Sigma$ and corresponding toric
  varieties $X$, $X^\prime$.  If $W$ is a function on $X$ with compact critical
  locus, the induced function $W^\prime$ on $X^\prime$ has compact critical
  locus as well.
 \label{lem:compact}
\end{lemma}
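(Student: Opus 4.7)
The plan is to exploit the proper morphism from each phase to a common affine base. Let
\[
Y := \spec \CC[\CC^N]^{(\CC^*)^\rho}
\]
be the affine GIT quotient; this variety is intrinsic to the $(\CC^*)^\rho$-action on $\CC^N$ and is, in particular, independent of the K\"ahler parameter. Since $W$ is $(\CC^*)^\rho$-invariant, it descends to a regular function $W_Y$ on $Y$, and for every top-dimensional cone $\sigma$ of the secondary fan one has a canonical proper surjective morphism $\pi_\sigma \colon X_\sigma \to Y$ satisfying $W|_{X_\sigma} = W_Y \circ \pi_\sigma$.

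First, I would use that properness of $\pi_\sigma$ makes a closed subvariety $Z \subset X_\sigma$ compact precisely when $\pi_\sigma(Z) \subset Y$ is bounded: forward by continuity, and reverse because $Z$ is closed inside the compact set $\pi_\sigma^{-1}(\pi_\sigma(Z))$. This reduces compactness of $\mathrm{Crit}(W|_{X_\sigma})$ to the boundedness of $\pi_\sigma\bigl(\mathrm{Crit}(W|_{X_\sigma})\bigr)$ inside the fixed affine variety $Y$.

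Next I would analyze this image via the chain rule $d(W_Y \circ \pi_\sigma) = dW_Y \circ d\pi_\sigma$. A critical point $x \in X_\sigma$ at which $d\pi_\sigma$ is surjective must map into the phase-independent subvariety $\mathrm{Crit}(W_Y) \subset Y$, and the preimage of any bounded subset of $\mathrm{Crit}(W_Y)$ is compact by properness of $\pi_\sigma$. The remaining critical points lie on the exceptional locus of the proper birational map $\pi_\sigma$, whose fibers are compact toric subvarieties of $X_\sigma$ encoded by the cones of $X_\sigma$'s fan that lie over the singular locus of $Y$. On each such fiber $F = \pi_\sigma^{-1}(y)$ the restriction $W|_F$ is the constant $W_Y(y)$, so $\mathrm{Crit}(W|_{X_\sigma}) \cap F$ is an algebraic subvariety of a compact toric variety and is itself compact. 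Combining these observations, compactness of $\mathrm{Crit}(W|_{X_\sigma})$ is governed by two conditions in $Y$: the boundedness of $\mathrm{Crit}(W_Y)$ and the boundedness of the image of the exceptional critical points.

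The main obstacle is the second condition. The exceptional locus of $\pi_\sigma$ may have unbounded image in $Y$, and whether a point on it is critical for $W|_{X_\sigma}$ depends on the condition $\mathrm{im}(d\pi_\sigma)_x \subset \ker(dW_Y)_{\pi_\sigma(x)}$, which entangles the map $\pi_\sigma$ with $W_Y$. The key to completing the argument will be to use the toric-combinatorial description of the exceptional locus in terms of the secondary fan, together with the $(\CC^*)^\rho$-equivariance of both $\pi_\sigma$ and $\mathrm{Crit}(W)$, to show that the exceptional critical set projects to a bounded subvariety of $Y$ exactly when $\mathrm{Crit}(W_Y)$ is bounded in $Y$---a condition that is manifestly independent of $\sigma$.
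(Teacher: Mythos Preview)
Your route through the affine GIT quotient $Y=\spec\CC[\CC^N]^{(\CC^*)^\rho}$ is genuinely different from the paper's. The paper does not pass to a single phase-independent base; instead it reduces to \emph{adjacent} cones $\sigma,\sigma'$ and uses the toric variety attached to the wall $\sigma''=\sigma\cap\sigma'$, obtaining proper birational maps $X\to X''\leftarrow X'$ through which $W$, $W'$ both factor. The paper then argues that the critical loci are related by strict transform under these proper maps, so compactness is preserved. Chaining wall-crossings handles non-adjacent cones.

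Your proposal, however, has a real gap precisely where you flag it. You reduce compactness of $\mathrm{Crit}(W|_{X_\sigma})$ to boundedness in $Y$ of two pieces: the part lying over $\mathrm{Crit}(W_Y)$, and the ``exceptional critical set'' lying over the locus where $d\pi_\sigma$ drops rank. For the second piece you assert that its image in $Y$ is bounded exactly when $\mathrm{Crit}(W_Y)$ is bounded, and that this equivalence will follow from toric combinatorics plus equivariance. But this is not established, and it is not clear it is even true as stated: the exceptional locus of $\pi_\sigma$ depends on $\sigma$, it can sit over an unbounded stratum of $Y$, and a point $x$ on it is critical for $W|_{X_\sigma}$ whenever $\mathrm{im}(d\pi_\sigma)_x\subset\ker(dW_Y)_{\pi_\sigma(x)}$---a condition that can hold with $\pi_\sigma(x)\notin\mathrm{Crit}(W_Y)$. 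Knowing that the exceptional critical set is bounded for one $\sigma$ gives you no direct control over the exceptional critical set for a different $\sigma'$, because the two exceptional loci live over different strata of $Y$. Invoking equivariance does not obviously close this, since the torus action on $Y$ need not contract the relevant strata.

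The paper's wall-crossing approach avoids this difficulty: for adjacent cones the two proper maps $X\to X''$ and $X'\to X''$ share the same base, so the comparison of critical loci happens over a single intermediate variety rather than over $Y$, and the exceptional behaviour on the two sides is linked by the common wall. If you want to repair your argument, either adopt that intermediate variety, or work directly with the $(\CC^*)^\rho$-invariant critical locus $\mathrm{Crit}(W)\subset\CC^N$ and compare its images in the various semistable quotients---that comparison is what the strict-transform language in the paper is really encoding.
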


\begin{proof}
  $X$ and $X^\prime$ are related by a proper birational transformation over
  the toric variety defined by $\sigma'' = \sigma \cap \sigma'$.  The
  critical loci are related by strict transform, because $W$ and $W'$
  factor through $W''$.  Such operations do not effect the compactness or
  non-compactness of a set.
\end{proof}

\begin{remark}
  If $W \takes {\CC^N} \CC$ is a $(\CC^*)^\rho$-invariant function whose
  critical locus is compact  in some phase, its critical locus is compact
  in every phase. 
\end{remark}

\begin{theorem}
  The Herbst Criterion together with a $(\CC^*)^\rho$-invariant function on
  $\CC^N$ whose critical locus is compact after quotienting (in any/every
  phase) is equivalent to the existence of an affine Landau-Ginzburg point.
  \label{thm:main}
\end{theorem}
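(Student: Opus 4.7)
The plan is to establish the biconditional of Theorem \ref{thm:main} by treating each implication separately, combining the two lemmas already proved with a standard fact about affine varieties. Lemma \ref{lem:hcasp} will translate the Herbst Criterion into existence of an affine stable point, and Lemma \ref{lem:compact} will make compactness of the critical locus phase-independent. The only further input I will use is that a compact closed subvariety of an affine variety must be zero-dimensional, i.e., a finite set of points.

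For the forward direction, I will assume $Q$ satisfies the Herbst Criterion and that $W$ is a $(\CC^*)^\rho$-invariant function on $\CC^N$ whose critical locus has compact quotient in some phase. Applying Lemma \ref{lem:hcasp} produces an affine stable point $s$; because the moment polyhedral set $P_s$ is a top-dimensional simplicial cone, the corresponding toric variety $X$ is affine. Lemma \ref{lem:compact} then transports compactness of the critical locus of $W$ to this phase, and affineness of $X$ will force that compact critical locus to be finite. Every critical point will therefore be isolated, and $s$ will be an affine Landau-Ginzburg point in the sense of Definition \ref{def:LG}.

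For the converse, I will start from an affine Landau-Ginzburg point, which by Definition \ref{def:LG} is an affine stable point at which $W$ has an isolated critical point on $X$. Lemma \ref{lem:hcasp} will immediately give the Herbst Criterion. The isolated critical point, viewed as a zero-dimensional closed subvariety of the affine $X$, is a finite (hence compact) set, and Lemma \ref{lem:compact} will propagate this compactness to every phase.

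The main obstacle, such as it is, will be conceptual rather than technical: one has to unpack Definition \ref{def:LG} to recognize that ``isolated critical point'' on an affine $X$ amounts to a zero-dimensional critical locus, and then assemble Lemmas \ref{lem:hcasp} and \ref{lem:compact} correctly. No machinery beyond the already-established secondary fan and symplectic quotient constructions is needed.
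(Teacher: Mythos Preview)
Your proposal is correct and follows exactly the paper's route: the paper's proof reads in its entirety ``Immediate, by Lemmas \ref{lem:hcasp} and \ref{lem:compact}.'' You have simply unpacked the two implications and made explicit the one extra observation (compact closed subvarieties of an affine variety are finite) that the paper leaves tacit in the paragraph preceding the theorem.
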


\begin{proof}
  Immediate, by Lemmas \ref{lem:hcasp} and \ref{lem:compact}.
\end{proof}

For convenience, let us define the cone of an $r \times n$ integer matrix as the cone over
the convex hull of its columns, thought of as $n$ elements of $\RR^r$.
\begin{definition}
  Let $T\takes {\ZZ^r} {\ZZ^n}$ and fix a basis for $\ZZ^r$.  Then
  \[
	 \text{Cone}(T) := \{\nu \in \RR^n = \ZZ^n \otimes_\ZZ \RR \:| \:
	 \nu = T(\rho) \text{ for } \rho \in \RR^r_{\geq 0}\}.
  \]
\end{definition}

\begin{remark}
  Theorem \ref{thm:main} shows that GLSMs admitting an affine Landau-Ginzburg
  phase are both extremely common and easily produced.  One may be found by
  simply choosing an arbitrary $r\times r$ integer matrix $R$ with non-zero
  determinant and selecting a finite set $S \subset \text{Cone}(-R) \cap
  \ZZ^r$. Then the matrix whose entries are $R$ and $S$, as in equation
  \eqref{eq:qform} below, satisfies the Herbst criterion.
  \label{rmk:produce}
\end{remark}

\subsection{Orbifold Structure of the Landau-Ginzburg Phase}
\label{ssec:orbifold}

Of great importance to the physics of the LG model is the finite group
$\Gamma$.  It determines the twisted sector of the model, which controls
much of the non-trivial dynamics.  As we now show, this group is inherited
from the $U(1)^r$ action in the GLSM as the stabilizer of certain coordinates
on $\CC^N$.\\

Consider a charge matrix $Q$ satisfying the Herbst Criterion, assumed for
simplicity to be a full-rank matrix.  Order the columns of $Q$ so that it
takes the form 
\begin{equation}
  Q = 
  \left(
  \begin{matrix}
    R & S\\
  \end{matrix}
  \right),
  \label{eq:qform}
\end{equation}
with $R$ an $r \times r$ integer matrix with non-zero determinant such that
\( R^{-1} \cdot Q \) is of the form given in equation \eqref{eq:hc}. \\

Choose $s$ to lie in the relative interior of  $\text{Cone}(R) \subset
\RR^r$ so that the polytope is a simplicial cone, and consider the quotient
$X$ as the algebraic quotient $(\CC^N \backslash E) / (\CC^*)^r$.  Since
the polytope is a cone, the excluded set is the union of coordinate
hyperplanes in $\CC^N$ corresponding to the half-spaces that do not define
codimension-one faces of the cone.\\

The action of the torus $(\CC^*)^r$ on \(
\spec \CC[x_1^{\pm 1}, \cdots, x_r^{\pm 1}, y_1, \cdots, y_n]
=
(\CC^N \backslash E)
\)
is then
\begin{equation}
  \begin{split}
	 \CC[x_1^{\pm 1}, \cdots, x_r^{\pm 1}, y_1, \cdots, y_n] & \ra \CC[z_1^{\pm 1}, \cdots, z_r^{\pm 1}, x_1^{\pm 1}, \cdots, x_r^{\pm 1}, y_1, \cdots, y_n] \\
	 x_i & \mapsto x_i z^{R^t_i}\\
	 y_j & \mapsto y_j z^{S^t_j}.
  \end{split}
  \label{eq:csaction}
\end{equation}

Here, we have set $z^{R^t_i} = \prod_k z_k^{R^t_{ik}}$ and $z^{S^t_j} = \prod_k
z_k^{S^t_{jk}}$ as in equation \eqref{eq:linearaction}.  Define $\Gamma \subset
(\CC^*)^r$ to be the stabilizer of the $x$'s under this action:
\begin{equation*}
  \Gamma := \spec \frac {\CC[z_1^{\pm 1}, \cdots, z_r^{\pm 1}]}{
  \langle z^{R^t_1} -1, \cdots, z^{R^t_r} -1 \rangle}  \subset (\CC^*)^r.
\end{equation*}

\begin{lemma} Consider $R^t \takes {\ZZ^r}{\ZZ^r}$ as a morphism of abelian
  groups. Then $\Gamma \cong \text{cok }R^t$ is a finite abelian group.
  \label{lem:gammaiscok}
\end{lemma}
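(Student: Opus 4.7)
The plan is to identify $\Gamma$ with the character group $\mathrm{Hom}(\mathrm{cok}\,R^t, \CC^*)$ and then invoke the standard fact that for a finite abelian group $A$, the character group $\mathrm{Hom}(A,\CC^*)$ is non-canonically isomorphic to $A$.

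First I would recall that $(\CC^*)^r = \spec \CC[\ZZ^r]$ may be identified with $\mathrm{Hom}(\ZZ^r, \CC^*)$: a point $\vec\lambda = (\lambda_1,\dots,\lambda_r)$ corresponds to the character sending $v \in \ZZ^r$ to $\vec\lambda^{\,v}$. Under this identification, the defining relations $z^{R^t_i}=1$ say precisely that the associated character $\chi\takes{\ZZ^r}{\CC^*}$ vanishes on every column of $R^t$, i.e.\ on $\mathrm{im}\,R^t$. Hence
\[
\Gamma \;\cong\; \mathrm{Hom}(\ZZ^r/\mathrm{im}\,R^t,\;\CC^*) \;=\; \mathrm{Hom}(\mathrm{cok}\,R^t,\;\CC^*),
\]
and this is an isomorphism of algebraic groups since the identification above is.

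Since $\det R \neq 0$, the cokernel $\mathrm{cok}\,R^t$ is a finite abelian group of order $|\det R|$, and $\mathrm{Hom}(\mathrm{cok}\,R^t, \CC^*)$ is then also finite abelian and (non-canonically) isomorphic to $\mathrm{cok}\,R^t$ itself. The cleanest way to see both statements simultaneously is to use Smith normal form: write $R^t = U D V$ with $U, V \in GL_r(\ZZ)$ and $D = \mathrm{diag}(d_1,\dots,d_r)$, $d_i \neq 0$. The change of variables $w = z^{U}$ (which is a torus automorphism of $(\CC^*)^r$) converts the defining relations into $w_i^{d_i} = 1$, so
\[
\Gamma \;\cong\; \prod_{i=1}^r \mu_{d_i} \;\cong\; \bigoplus_{i=1}^r \ZZ/d_i\ZZ \;=\; \mathrm{cok}\,D \;\cong\; \mathrm{cok}\,R^t.
\]
There is no real obstacle here; the only point requiring some care is that the intermediate identification $\mathrm{Hom}(A,\CC^*)\cong A$ for finite abelian $A$ is non-canonical, but the overall isomorphism $\Gamma \cong \mathrm{cok}\,R^t$ claimed in the lemma only asserts abstract isomorphism of abelian groups, which is exactly what the Smith normal form argument delivers.
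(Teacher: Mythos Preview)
Your proof is correct and takes a genuinely different route from the paper's. The paper proceeds by an explicit diagram chase using the exponential sequence $0 \to \ZZ^r \to \RR^r \to U(1)^r \to 0$: it first checks directly that $\Gamma \subset U(1)^r$, then establishes the two inclusions $\text{cok}\,R^t \subset \Gamma$ and $\Gamma \subset \text{cok}\,R^t$ by lifting elements through the diagram and verifying the relations by hand. Your argument is more conceptual: you identify $\Gamma$ directly as a Pontryagin-dual object and then appeal to Smith normal form. This buys you a cleaner argument and, as a bonus, essentially proves the paper's next Proposition (the explicit decomposition $\Gamma \cong \prod_i \ZZ_{d_i}$) in the same breath.

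One small slip worth flagging: in the paper's convention $z^{R^t_i} = \prod_k z_k^{(R^t)_{ik}}$ uses the $i$-th \emph{row} of $R^t$ (equivalently the $i$-th column of $R$), so what your character identification literally yields is $\Gamma \cong \mathrm{Hom}(\text{cok}\,R,\,\CC^*)$ rather than $\mathrm{Hom}(\text{cok}\,R^t,\,\CC^*)$. This does not affect the conclusion, since $R$ and $R^t$ share the same elementary divisors and your Smith normal form computation is self-contained and delivers $\Gamma \cong \bigoplus_i \ZZ/d_i\ZZ \cong \text{cok}\,R^t$ regardless.
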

\begin{proof}
  Note that $\Gamma \subset U(1)^r \subset (\CC^*)^r$ iff  for all $z \in
  \Gamma$, $1/z \in \Gamma$.  By inverting the relation $z^{R^t} = 1$, we
  have \( 1 = 1/z^{R_j^t} = \prod_{k=1}^r ( 1/z_k)^{R_{kj}} \)
  so $\Gamma \subset U(1)^r$.  
  Now, consider the following commutative diagram of abelian
  groups with exact rows:
  \begin{equation*}
	 \xymatrix {
	 0 \ar[r] & \ZZ^r \ar@2{-}[d] \ar[r]^i & \RR^r \ar[r]^{\text{exp}\hspace*{3mm}} & U(1)^r \ar[r]        & 0 \\
	 0 \ar[r] & \ZZ^r \ar[r]^{R^t}         & \ZZ^r \ar[r] \ar[u]_{(R^t)^{-1}}       & \text{cok }R^t \ar[r] \ar[u] & 0.
	 }
  \end{equation*}

  Let $z \in \text{cok }R^t$, and $\wt z \in \ZZ^r$ a lift of $z$.  Then,
  composing the lift with $(R^t)^{-1}$ and exp yields
  \( \wt z \mapsto \text{exp}\big (2\pi i (R^t)^{-1} \cdot \wt z\big ),\) which 
  satisfies
  \[ \text{exp} \big(2\pi i (R^t)^{-1} \wt z\big)^{R^t} = e^{2\pi i \wt
  z} = 1.\]
  Thus $\text{cok }R^t \subset \Gamma$. \\

  Let $\gamma \in \Gamma$ be a non-trivial element, and choose a lift $\wt
  \gamma \in \RR^r$.  Since $\gamma^{R^t} = 1$, $(R^t) \cdot \wt\gamma
  \subset \ZZ^r$, so $\wt \gamma$ maps to $\ZZ^r$ in the bottom row.
  Furthermore, $(R^t)^{-1} \wt \gamma$ is not in the image of $\ZZ^r$,
  since otherwise $\gamma=1$.  Thus, it lies in the cokernel and $\Gamma
  \subset \text{cok }R^t$.\\

  In both cases, lift independence follows from usual diagram chasing.
  Since $R$ is full-rank, we have that $\text{cok }R^t \cong \Gamma$ is a
  finite abelian group.
\end{proof}

It will also be important for us to know the precise form of $\Gamma$.

\begin{proposition}
  Let $D$ be the Smith normal form of $R$, $D = \text{URV}$ with U and V
  invertible over $\ZZ$, and denote its diagonal entries by $d_i$ for $1
  \leq i \leq r$.  Then 
  \[
  \Gamma \cong \ZZ_{d_1} \times \cdots \times \ZZ_{d_r}
  \cong \spec \frac { \CC[\zeta_1^{\pm 1}, \cdots, \zeta_r^{\pm 1}] } { \langle \zeta_1^{d_1} - 1, \cdots, \zeta_r^{d_r} - 1\rangle}.
  \]
\end{proposition}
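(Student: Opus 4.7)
The plan is to reduce the statement to a standard computation with Smith normal form, using the previous lemma to identify $\Gamma$ with $\text{cok } R^t$.

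First, I would invoke Lemma \ref{lem:gammaiscok}, so that it suffices to prove that $\text{cok } R^t \cong \ZZ_{d_1} \times \cdots \times \ZZ_{d_r}$ as abelian groups, and then to upgrade this to the asserted isomorphism of group schemes. From $D = URV$ with $U, V \in GL_r(\ZZ)$ diagonal entries $d_i$, one gets $R^t = V^t \, D \, U^t$, and since $V^t$ and $U^t$ are still invertible over $\ZZ$, the matrix $D$ is also the Smith normal form of $R^t$. Pre-composition and post-composition by automorphisms of $\ZZ^r$ do not alter the cokernel, so
\[
\text{cok } R^t \;\cong\; \text{cok } D \;\cong\; \ZZ^r / (d_1 \ZZ \oplus \cdots \oplus d_r \ZZ) \;\cong\; \ZZ_{d_1} \times \cdots \times \ZZ_{d_r}.
\]

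Next I would translate this back into the Spec picture to obtain the second isomorphism in the statement. The group scheme $\mu_d = \spec \CC[\zeta^{\pm 1}]/\langle \zeta^d - 1\rangle$ is the Cartier dual of $\ZZ/d\ZZ$, and one has $\mu_{d_1} \times \cdots \times \mu_{d_r} \cong \spec \CC[\zeta_1^{\pm 1},\ldots,\zeta_r^{\pm 1}]/\langle \zeta_1^{d_1}-1,\ldots,\zeta_r^{d_r}-1\rangle$ since the coordinate ring of a product is the tensor product of coordinate rings. To match this with $\Gamma$, I would apply the change of coordinates on $(\CC^*)^r$ induced by $U^t$ and $V^t$: the substitution $w = z^{V^t}$ turns the defining relations $z^{R^t_i} - 1$ into the relations $w^{D_i} - 1 = w_i^{d_i} - 1$ (after further absorbing $U^t$ via another monomial change of variables, which is a $\ZZ$-invertible coordinate change on the torus and hence an isomorphism of $\spec$). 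This identifies $\Gamma$ with the claimed product of $\mu_{d_i}$'s.

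The only genuinely substantive step is verifying that the two monomial changes of variable coming from $U^t$ and $V^t$ really do convert the ideal $\langle z^{R^t_i} - 1\rangle_i$ into $\langle \zeta_i^{d_i} - 1\rangle_i$; this is a routine but slightly fiddly computation, and I would write it as: $\zeta = z^{V^t}$ gives $\zeta^{U^t R^t V^t (V^t)^{-1}} \cdots$, i.e.\ directly use $R^t = V^t D U^t$ together with the invertibility of $U^t, V^t$ over $\ZZ$ to see that the ideal generated by $\{z^{R^t_i} - 1\}$ coincides, after the change of variables, with the ideal generated by $\{\zeta_i^{d_i} - 1\}$. I do not anticipate any real obstacle here—the content of the proposition is essentially the classification of finitely generated abelian groups via Smith normal form, with the Cartier-dual dictionary providing the scheme-theoretic wrapping.
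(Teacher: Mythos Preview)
Your approach is essentially the same as the paper's: invoke Lemma~\ref{lem:gammaiscok} to identify $\Gamma$ with $\operatorname{cok} R^t$, then use Smith normal form to read off the invariant factors. One small slip: from $D = URV$ you get $D = D^t = V^t R^t U^t$, hence $R^t = (V^t)^{-1} D\, (U^t)^{-1}$, not $R^t = V^t D U^t$ as you wrote; this does not affect the cokernel computation, since all that matters is that $R^t$ and $D$ differ by $\ZZ$-invertible matrices on each side.

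The paper packages the same content slightly differently: rather than appealing to Cartier duality and a monomial change of coordinates on the torus, it writes down a commutative diagram comparing the exact sequences $0 \to \ZZ^r \to \RR^r \to U(1)^r \to 0$ for the maps $R^t$ and $D^t$, and reads off the explicit isomorphism $\zeta \mapsto \zeta^{U^t}$ between the copies of $U(1)^r$. This explicit formula is then used in the subsequent remark to describe the action of $\bigoplus_a \ZZ_{d_a}$ on $\CC^n$ as $\zeta \mapsto (\zeta^{U^t})^{S^t}$, so the diagrammatic route buys you that concrete description for free. Your coordinate-change argument would yield the same formula once you carry it through carefully (and fix the transposition), but as written it is left a bit implicit.
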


\begin{proof}
  By Lemma \ref{lem:gammaiscok}, $\Gamma \cong \text{cok } R^t$.
  Transposing the expression of the Smith normal form above, we obtain a
  commutative diagram of abelian groups with exact rows:
  \begin{equation*}
	 \xymatrix {
	 0 \ar[r] & \ZZ^r \ar@2{-}[d] \ar[r]^i      & \RR^r \ar[r]^{\text{exp}\hspace*{3mm}}               & U(1)^r \ar[r]                                                    & 0 \\
	 0 \ar[r] & \ZZ^r \ar[r]^{R^t}              & \ZZ^r \ar[r] \ar[u]_{(R^t)^{-1}}                     & \Gamma \ar[r] \ar[u]                                             & 0\\
	 0 \ar[r] & \ZZ^r \ar[r]^{D^t} \ar[u]^{U^t} & \ZZ^r \ar[u]_{(V^t)^{-1}} \ar[r] \ar[d]^{(D^t)^{-1}} & \bigoplus_{a=1}^r \ZZ_{d_a} \ar@{..>}[u]_{\mid\wr} \ar[d] \ar[r] & 0\\
	 0 \ar[r] & \ZZ^r \ar@2{-}[u] \ar[r]^i      & \RR^r \ar[r]_{\text{exp}\hspace*{3mm}}               & U(1)^r \ar[r]                                                    & 0. \\
	 }
	 \hspace*{-2.0cm}
	 \xy
	 (0,3)*{}; 
	 (0,-45)*{}; 
	 **\crv{~*=<2pt>{\scriptscriptstyle .} (20,-62)&(40,-21)&(20,20)} ?(1)*\dir{>}; 
	 (33,-21)*{\scriptstyle U^t}; 
	 \endxy
  \end{equation*}
  The induced morphism between $U(1)^r$ and $U(1)^r$ obtains by first lifting
  and then composing the vertical morphisms in the center columns,
  $U^t=(R^t)^{-1}(V^t)^{-1}D^t$.  In particular, it
  is given by exponentiation as in equation
  \eqref{eq:linearaction}; for $\zeta \in U(1)^r$,
  \begin{equation}
	 \zeta \mapsto \zeta^{U^t} = (\zeta^{U_1^t}, \cdots, \zeta^{U_r^t}).
	 \label{eq:finiteiso}
  \end{equation}
  A standard diagram chase shows that the composition is independent of the
  chosen lift.  That $U^t$ is an isomorphism follows immediately from the
  fact that $U$ is invertible over the integers.  Furthermore, the induced
  action on $\bigoplus_{a=1}^r \ZZ_{d_a} \subset U(1)^r$ is the desired
  isomorphism. 
\end{proof}

The Smith normal form may be easily computed by employing, for example, the
\texttt{smithNormalForm()} command in Macaulay2\cite{M2}.  
Now that we have established that the stabilizer is a finite abelian group, 
we show that the quotient is in fact an orbifold of $\CC^n$ by this group.

\begin{theorem}
  In a Landau-Ginzburg phase the quotient $X \cong \CC^n / \Gamma$.
\end{theorem}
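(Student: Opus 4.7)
The plan is to use the simplicial-cone structure of the moment polytope in an LG phase to pin down the excluded set $E$ explicitly, then recognize the geometric quotient as a slice of a trivially-fibered torus action.

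First I would identify $\CC^N \setminus E$ concretely. By the analysis in Lemma \ref{lem:hcasp}, the polytope $P_s$ in an LG phase is the positive orthant in $M_\RR$, whose codimension-one faces come from the $n$ rows $\{m_j \geq 0\}$ of $A$ corresponding to the $y$-coordinates. The first $r$ half-spaces (those coming from $\mathbf{N}$) are non-binding on the positive orthant and hence do not define facets; their coordinates are therefore excluded. Thus $E = \{x_1 \cdots x_r = 0\}$ and
\[
\CC^N \setminus E \;\cong\; (\CC^*)^r \times \CC^n
\]
with coordinates $(x_1,\dots,x_r,y_1,\dots,y_n)$ and the $(\CC^*)^r$-action given by \eqref{eq:csaction}, namely $z\cdot(x,y)=(z^{R^t}x,\,z^{S^t}y)$.

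Next I would exhibit the isomorphism via a slice. Define $\iota\takes{\CC^n}{(\CC^*)^r \times \CC^n}$ by $\iota(y)=(1,\dots,1,y)$. The composite morphism to $X$ is $\Gamma$-invariant: if $\gamma\in\Gamma$ acts on $\CC^n$ by $\gamma\cdot y_j = \gamma^{S^t_j}y_j$, then by the defining relation $\gamma^{R^t_i}=1$ one has $\gamma\cdot\iota(y)=\iota(\gamma\cdot y)$ as a $(\CC^*)^r$-orbit. This yields $\bar\iota\takes{\CC^n/\Gamma}{X}$. For the inverse, note that the algebraic-group homomorphism $(\CC^*)^r\to(\CC^*)^r$, $z\mapsto z^{R^t}$, is surjective with kernel $\Gamma$ (since $R$ is invertible over $\QQ$), so each $x\in(\CC^*)^r$ admits a solution $z$ of $z^{R^t}=x$, unique up to $\Gamma$. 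The assignment $(x,y)\mapsto z^{-S^t}y$ is then well-defined in $\CC^n/\Gamma$ and $(\CC^*)^r$-invariant, giving $\bar\pi\takes{X}{\CC^n/\Gamma}$. A direct computation shows $\bar\pi\circ\bar\iota=\mathrm{id}$ and $\bar\iota\circ\bar\pi=\mathrm{id}$.

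To upgrade this set-theoretic bijection to an isomorphism of varieties, I would work on coordinate rings. The monomial $x^a y^b$ (with $a\in\ZZ^r$, $b\in\ZZ^n_{\geq 0}$) is $(\CC^*)^r$-invariant iff $Ra+Sb=0$, i.e.\ $a=-R^{-1}Sb$, which constrains $b$ to the sublattice $\{b:R^{-1}Sb\in\ZZ^r\}$. On the other side, $y^b$ is $\Gamma$-invariant iff $\gamma^{Sb}=1$ for all $\gamma\in\Gamma$, and Pontryagin duality identifies this with the condition $Sb\in R\ZZ^r$, i.e.\ $R^{-1}Sb\in\ZZ^r$. Thus $\iota^*$ gives a bijection on invariant monomials and an isomorphism
\[
\CC[x_1^{\pm 1},\dots,x_r^{\pm 1},y_1,\dots,y_n]^{(\CC^*)^r} \;\xrightarrow{\sim}\; \CC[y_1,\dots,y_n]^{\Gamma}.
\]

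The main obstacle is really just bookkeeping: matching the invariance condition $Ra+Sb=0$ on the GIT side with the character-theoretic condition for $\Gamma$-invariance on the orbifold side, and keeping the transposes straight in \eqref{eq:csaction}. Once these are aligned, everything else is essentially the observation that $(\CC^*)^r$ acts on $(\CC^*)^r \times \CC^n$ with uniform stabilizer $\Gamma$, so the quotient is represented by the obvious slice.
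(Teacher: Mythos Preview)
Your argument is correct and follows essentially the same route as the paper: identify $\CC^N\setminus E=(\CC^*)^r\times\CC^n$, use the slice $\{x=1\}$, and recognize the residual ambiguity in the choice of $z$ with $z^{R^t}x=1$ as exactly the $\Gamma$-action. The paper's proof is simply the terse version of your first two paragraphs; your additional verification at the level of invariant monomials is not in the paper but is a welcome (and correct) check that the bijection is algebraic.
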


\begin{proof}
  Since $R$ is full rank, for all $(x,y) \in \CC^N \backslash E$, there exists
  a $z\in (\CC^*)^r$ such that $z^R x = 1$.  Thus, for any $[(x,y)] \in X$,
  \[[(x,y)] = [(1,y')].\]
  Furthermore, since $\Gamma \subset (\CC^*)^r$ is the
  stabilizer of the $x$'s, $[(1,y)] = [(1,\gamma^{S^t} y)]$ for all $\gamma \in
  \Gamma$ and $[(1,y)] \in X$.  It follows immediately that the map 
  \begin{equation*}
	 \begin{split}
		X & \ra \CC^n / \Gamma \\
		[(1,y)] & \mapsto [y]
	 \end{split}
  \end{equation*}
  is an isomorphism.
\end{proof}

\begin{remark}
  As indicated above, the action of $\Gamma$ on $\CC^n$ is specified by $S$ as in
  equation \eqref{eq:csaction} and the action of the presentation
  $\bigoplus_{a=1}^r \ZZ_{d_a}$ of $\Gamma$ is given by 
  \begin{equation*}
	 \zeta \mapsto (\zeta^{U^t})^{S^t}
  \end{equation*}
  with $\zeta^{U^t}$ as in equation \eqref{eq:finiteiso}.
\end{remark}

A natural question to ask is whether or not the affine phase is unique; that is, 
if there are two affine phases in the secondary fan, are the resulting quotients isomorphic?  The answer is affirmative, as the following theorem shows. 

\begin{theorem}
  Affine quotients are unique up to unique isomorphism.
\end{theorem}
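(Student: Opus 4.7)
The approach is to identify every affine quotient canonically with the categorical affine quotient
\[
Y := \spec\!\bigl(\CC[\phi_1,\ldots,\phi_N]^{(\CC^*)^\rho}\bigr),
\]
an object manifestly independent of any choice of phase. Two affine quotients $X$ and $X'$ are then canonically identified via $X \cong Y \cong X'$, with uniqueness following from the universal property of $Y$ as the categorical quotient of $\CC^N$ by $(\CC^*)^\rho$.

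For any stable $s$ the inclusion of the invariant subring into the full coordinate ring induces a canonical morphism $\pi_s \takes {X_s}{Y}$, and the key step is to show this is an isomorphism whenever $s$ defines an affine phase. I would argue this by direct comparison of coordinate rings. From the proof of Lemma \ref{lem:hcasp}, in an affine phase the polytope $P_s$ is a top-dimensional simplicial cone with apex at the origin in $M_\RR$, so the standard toric construction yields
\[
X_s \;\cong\; \spec\, \CC[P_s\cap M].
\]
Meanwhile, the torus action \eqref{eq:linearaction} has an isotypic decomposition $R=\bigoplus_\chi R_\chi$, and the $(\CC^*)^\rho$-invariants are the monomials $\phi^a$ with $a\in \ZZ^N_{\geq 0}\cap \ker Q$. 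Using the exactness of \eqref{eq:torusseq} to identify $\ker Q = A(M_\RR)$, together with the identification $P_s=\{m\in M_\RR:A(m)\geq 0\}$ valid in an affine phase (the shift $\wt s$ in \eqref{eq:polytope} becomes redundant, since the first $r$ rows of $A$ are non-negative combinations of the last $n$), one obtains the bijection $\{a\in \ZZ^N_{\geq 0}\cap \ker Q\} = A(P_s\cap M)$. Hence $R^{(\CC^*)^\rho} = \CC[P_s\cap M]$ and $\pi_s$ is an isomorphism.

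Composing the canonical isomorphisms $X \cong Y \cong X'$ then yields the canonical isomorphism $X \cong X'$, and its uniqueness follows from the universal property of $Y$: any $(\CC^*)^\rho$-invariant morphism to an affine scheme factors uniquely through $Y$, and each $\pi_s$ is characterized by this property. The main technical obstacle is the bookkeeping between the polytopal and invariant-theoretic descriptions of $X_s$ --- specifically, the verification that in an affine phase $P_s$ coincides with its asymptotic cone, so that the polytope's defining inequalities collapse to the cone condition $A(m)\geq 0$. This is what allows the polytopal data of Section \ref{ssec:quot} to be cleanly reinterpreted as an isotypic statement about $\CC[\phi_1,\ldots,\phi_N]$ and closes the loop.
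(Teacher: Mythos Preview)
Your argument is correct and arrives at the same destination as the paper, but by a different route. The paper's proof observes that the ring of regular functions on \emph{any} quotient $X_s$ (affine or not) depends only on $A$: regular functions are spanned by those characters $m\in M$ whose order of vanishing along each toric divisor is non-negative, and these orders are the entries of $A(m)$. Since an affine variety is $\spec$ of its global regular functions, any two affine quotients are canonically isomorphic. You instead identify the coordinate ring of an affine $X_s$ with the invariant ring $\CC[\phi_1,\ldots,\phi_N]^{(\CC^*)^\rho}$ via the polytope/isotypic comparison, and then invoke the universal property of the categorical quotient. Both routes compute the same ring $\CC[\{m\in M: A(m)\geq 0\}]$; the paper reaches it through toric divisor theory, you through GIT. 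The paper's approach is shorter and yields the stronger incidental fact that $\mathcal{O}(X_s)$ is phase-independent even for non-affine $s$; your approach has the virtue of making the ``unique isomorphism'' clause explicit via the universal property, which the paper leaves implicit. One small caution: your identification $P_s=\{m:A(m)\geq 0\}$ holds literally only for the particular lift $\wt s=(\sigma,0)$ constructed in Lemma~\ref{lem:hcasp}; for other lifts the apex is translated off the origin, though of course the associated semigroup ring is unchanged.
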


\begin{proof}
  First, we note that the ring of regular functions for any quotient $X_s$ is
  independent of phase in which $s$ lies.  
  Rational functions on any toric variety are spanned by characters on the
  torus, and a character is regular iff it vanishes along each toric divisor.
  As the order of vanishing along a divisor is given by the entries of the map
  $A$ in Equation \ref{eq:torusseq}, the ring of regular functions depends only on $A$.
  Since the quotient $X_s$ is an affine variety whenever $s$ is an affine
  stable point, the result follows immediately.
\end{proof}

As we will see in Example \ref{ex:unique}, the finite groups may differ 
across affine phases, but they yield the same action on $\CC^n$.

\section{Examples}
\label{sec:examples}

The Herbst Criterion allows ready implementation, whether by hand or as
part of a computer program.  Typically, one restricts attention to the map
$\RR^N \rightarrow \text{Im } Q$.  In this case, Theorem \ref{thm:main}
says that 

\begin{remark}
\label{rmk:algorithm}
An affine stable point exists iff there exists a non-zero maximal minor of
$Q$ (arising say, from a submatrix $R$) such that entries of $R^{-1} Q$
away from $R$ are non-positive.  Here $R$ is inverted over $\QQ$.
\end{remark}

As in \S\ref{ssec:orbifold}, we may then order the columns of $Q$ so that
$Q = (R\; S)$, $R^{-1} S$ has non-positive rational entries, and the LG phase
is Cone$(R)$.  This follows from the changing basis for the D-terms in
\eqref{eq:dterms} to produce the form of $Q$ given in
\eqref{eq:hc}:
\begin{equation*}
  \begin{split}
	 \sum_{i=1}^N (R^{-1} \cdot Q)_i^a \abs{\phi^i}^2 &= (R^{-1}\cdot r)^a\\
	 \sum_{i=1}^N ( \mathds 1 \;\; \mathbf n)^a_i \abs{\phi^i}^2 &=  s^a.
 \end{split}
\end{equation*}
Then, $R$ provides the basis change back to the original coordinates, so that
positive $s^a$ leads to the cone over $R$.  As in the proof of Lemma
\eqref{lem:hcasp}, $s^a >0$ for all $1 \leq a \leq r$ denotes an LG point.\\ 

A program to find submatrices of a specified charge matrix that define an LG
phase is given in Appendix \ref{app:program}.\\

Physically, one would like to know which of the bosons in the theory (the Cox
coordinates) obtain VEVs in the low-energy limit and which become coordinates
in the LG phase.  As mentioned before, in an LG phase the excluded set in
$\CC^N$ is a union of coordinate hyperplanes.  In the notation above, these
coordinates correspond to the first $r$ $\phi$'s.  Assuming an appropriate
choice for superpotential, one will have that these first $r$ coordinates
obtain VEVs while the remaining $n$ become coordinates in the LG phase.\\

\begin{example}[Example \ref{ex:canonicalpm}, redux]
  Consider a GLSM with charge matrix $Q=(1,1,-2)$ and superpotential $W =
  \phi_0 ( \phi_1^2 + \phi^2_2 + \phi_1 \phi_2 )$. In the geometric phase the
  quotient variety is the canonical bundle of $\PP^1$.  The D-term in this
  model is 
  \[ \abs{\phi_1}^2 + \abs{\phi_2}^2 - 2 \abs{\phi_0}^2 = r.\]
  As before, one takes $R=(-2)$, so that $\mathbf n = (-\half, -\half)$, 
  $s = -\half r$, and the image of the polytope in $\RR^3$ is
  \begin{align*}
	 \abs{\phi_0}^2 = s+ \half \Big ( \abs{\phi_1}^2 + \abs{\phi_2}^2 \Big ),
  \end{align*}

  Since the excluded set for positive $s$ is the coordinate hyperplane
  $\{\phi_0=0\}$, critical points of the superpotential occur only at $\phi_1 =
  \phi_2 = 0$.  Thus $\phi_0$ obtains a VEV, which may be chosen as
  $\langle\phi_0\rangle = \sqrt{s}$, while $\phi_1$ and $\phi_2$ become
  coordinates in the LG phase.  It is furthermore clear that the Smith normal
  form of $R$ is $(2)$, so that the finite group is $\ZZ_2$ acting as $(\phi_1,
  \phi_2) \mapsto (-\phi_1, -\phi_2)$.
  \label{ex:lgphase}
\end{example}

\begin{example}[Example \ref{example:canonicalrwp}, redux]
  In this model, we found that $Q$ in equation \eqref{eq:rwp4cm} may be
  written as $Q = (R\quad S)$ with
  \[ R = 
  \left(
  \begin{matrix}
	 1 & -4 \\
	 -2 & 0 
  \end{matrix}
  \right) \quad\text{ and }\quad 
  S = \left(\begin{matrix} 0&0&1&1&1\\ 1&1&0&0&0\end{matrix}\right),
  \]
  so the LG phase is given by Cone$(R)$ for appropriate choice of
  superpotential.  See Figure \ref{fig:lgphase}, and compare with Figure 2 of
  \cite{Morrison:1994fr} (up to quantum corrections).  It is easy to check that
  the Smith normal form of $R$ is 
  \[
  D = \text{U R V} = 
  \left(
  \begin{matrix}
	 8 & 0 \\
	 0 & 1 
  \end{matrix}
  \right)
  =
  \left(
  \begin{matrix}
	 2 & 1 \\
	 1 & 1 
  \end{matrix}
  \right)
  \left(
  \begin{matrix}
	 1 & -4 \\
	 -2 & 0 
  \end{matrix}
  \right)
  \left(
  \begin{matrix}
	 4 & -1 \\
	 -1 & 0 
  \end{matrix}
  \right),
  \]
  so that $\Gamma \cong \ZZ_8 \times \ZZ_1 \subset U(1)^2$ and $X =
  \CC^5/\ZZ_8$.  To find the action of $\ZZ_8$ on $\CC^5$, let $(\zeta_1,
  \zeta_2)$ be generators of $\ZZ_8 \times \ZZ_1$ so that 
  \[
  \begin{split} 
	 \zeta &\mapsto \zeta^{U^t_j}\\
	 (\zeta_1, \zeta_2 = 1) &\mapsto ( \zeta_1^2 \zeta_2,
	 \zeta_1 \zeta_2) = (\zeta_1^2, \zeta_1),
  \end{split}
  \]
  and the action on
  coordinates is determined by S as:
  \[
  \xymatrix{
  (\phi_1, \phi_2, \phi_3, \phi_4, \phi_5 )
  \ar@{|->}[r]^{\hspace*{-2em}(\zeta^{U^t})^{S^t}} & 
  (\zeta_1 \phi_1 , \zeta_1 \phi_2, \zeta_1^2 \phi_3, \zeta_1^2 \phi_4, \zeta_1^2 \phi_5 ).
  }
  \]
  \label{ex:crwp2}
\end{example}

\begin{figure}[t]
  \centering
  \includegraphics{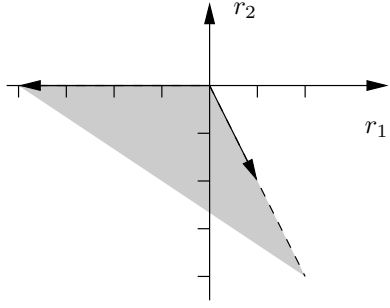}
  \caption{The Landau-Ginzburg phase for Example \ref{ex:crwp2}.}
  \label{fig:lgphase}
\end{figure}

\begin{example}
  Consider the charge matrix 
  \begin{equation}
	 \label{eq:twoLG}
	 Q = \left( \begin{matrix} 
		0 & 1 & 1 & 1 & 1 & -4 \\
		1 & 0 & 0 & 0 & -2 & 0
	 \end{matrix} \right). 
  \end{equation}
  There are two cones of the secondary fan containing affine stable points:
  \[
  \text{Cone} ( (0, 1), (-4, 0)) \qquad \text{ and } \qquad
  \text{Cone} ( (1, -2), (-4, 0)).
  \]
  One can easily check that the finite groups are $\ZZ_4$ and $\ZZ_8$, respectively, but 
  both have $\ZZ_4$ actions on $\CC^6$.
  \label{ex:unique}
\end{example}

\appendix
\section{Algorithm Implementation}
\label{app:program}

The following Mathematica\cite{Mathematica7} program takes a full-rank
matrix of charges $Q$, and returns a list of the Landau-Ginzburg phases.
Each phase is presented as the submatrix $R$ of $Q$ as in Remark
\ref{rmk:algorithm} along with the column numbers of $Q$ from which $R$ was
obtained.  \\

\begin{verbatim}
InvertibleSubs[Q_List] := Module[{r,out,sub,i,j,sets,Qsub},
  out = {};
  r = MatrixRank[Q];
  sets = Subsets[Table[i, {i, 1, Length[Q[[1]]]}], {r}];
  For[i = 1, i <= Length[sets], i++,
   sub = Transpose[(Transpose[Q][[#]] & /@ sets[[i]])];
   If[Det[sub] != 0,
    Qsub = Q;
    For[j = 1, j <= Length[sets[[i]]], j++,
     Qsub = Transpose[Drop[Transpose[Qsub],
	     {Sort[sets[[i]], Greater][[j]]}]]
    ];
    Qsub = Inverse[sub].Qsub;
    If[Plus @@ (If[# > 0, 1, 0] & /@ Flatten[Qsub]) == 0,
     AppendTo[out, {sub, sets[[i]]}];
    ]
   ]
  ];
  Return[out];
]
\end{verbatim}

\bibliography{all}
\end{document}